\documentclass[12pt]{amsart}
 \usepackage{amsthm,amsmath,amssymb,amscd,epsfig,mathtools,tabu}

 \setlength{\oddsidemargin}{0cm} \setlength{\evensidemargin}{0cm}
\setlength{\marginparwidth}{0in}
\setlength{\marginparsep}{0in}
\setlength{\marginparpush}{0in}
\setlength{\topmargin}{0in}
\setlength{\headheight}{0pt}
\setlength{\headsep}{20pt}
\setlength{\footskip}{.3in}
\setlength{\textheight}{8.8in}
\setlength{\textwidth}{6.5in}
\setlength{\parskip}{4pt}

\linespread{1.15} 


 

 
 %
 {\end{list}}
 {
    \newtheorem{theorem}{Theorem}[section]
    
    \newtheorem{lemma}[theorem]{Lemma}

    \newtheorem{conjecture}[theorem]{Conjecture}

 }
 {\theoremstyle{definition}

    \newtheorem{example}[theorem]{Example}
    \newtheorem{definition}[theorem]{Definition}
    \newtheorem{remark}[theorem]{Remark}
 }

 %

 \newcommand{\bc}{{\bf c}}
 \newcommand{\bd}{{\bf d}}

 \newcommand{\cv}{{\bf c}}
 \newcommand{\dv}{{\bf d}}
 
 \newcommand{\cd}{\cv\dv}

 \newcommand{\RR}{{\mathbb{R}}}

 \newcommand{\PP}{{\mathbb{P}}}
 \newcommand{\ZZ}{{\mathbb{Z}}}

  \newcommand{\mdeg}{\operatorname{mdeg}}
  \renewcommand{\min}{\operatorname{min}}
  \newcommand{\supp}{\operatorname{supp}}
 \newcommand{\Int}{\operatorname{Int}}

 \newcommand{\setmin}{{\smallsetminus}}

 \setcounter{tocdepth}{1}
 \begin{document}
 \title{$M$-vector analogue for the $\cd$-index}

 \author{Kalle Karu}
 \address{Department of Mathematics\\ University of British Columbia \\
   1984 Mathematics Road\\
 Vancouver, B.C. Canada V6T 1Z2}
 \email{karu@math.ubc.ca}

 \begin{abstract}
A well-known conjecture of McMullen, proved by Billera, Lee and Stanley, describes the face numbers of simple polytopes. The necessary and sufficient condition is that the toric $g$-vector of the polytope is an $M$-vector, that is, the vector of dimensions of graded pieces of a standard graded algebra $A$. Recent work by Murai, Nevo  and Yanagawa suggests a similar condition for the coefficients of the $\cd$-index of a poset $P$. The coefficients of the $\cd$-index are conjectured to be the dimensions of graded pieces in a standard multigraded algebra $A$.  We prove the conjecture for simplicial spheres and we give numerical evidence for general shellable spheres. In the simplicial case we construct the multi-graded algebra $A$ explicitly using lattice paths. 
 \end{abstract}

 \maketitle

 \section{Introduction}
 \setcounter{equation}{0}

We start by recalling the conjecture of McMullen \cite{McMullen}, proved by Billera, Lee and Stanley \cite{BilleraLee, Stanley}, that describes all possible face numbers of simple polytopes. A polytope $P$ is simple if its normal fan is simplicial. Let $f_i$ be the number of  $i$-dimensional cones in the normal fan, $i=0,\ldots,n=\dim P$. One encodes these numbers in the toric $h$-vector $(h_0,\ldots,h_n)$, using the equality of polynomials
\[ \sum_{i=0}^n h_i t^i = \sum_{i=0}^n f_i(t-1)^{n-i}.\]
The $h$-numbers satisfy the Euler's equation $h_0=1$ and the Dehn-Sommerville equations $h_i = h_{n-i}$. Define the toric $g$-vector $(g_0,\ldots,g_{\lfloor n/2 \rfloor})$ by
\[ g_0=1,\quad g_i = h_i-h_{i-1}, \text{ for } i>0.\]
Then all linear relations among the face numbers $f_i$ are given by $g_0=1$, $g_i\geq 0$. However, there are additional non-linear relations. The precise condition is that there exists a standard graded $k$-algebra $A = \oplus_i A_i$, with $A_0=k$ the ground field and $A$ generated in degree one, such that 
\[ g_i = \dim A_i, \quad \text{ for } i \geq 0.\]
Such vectors $(g_i)$ that are given by dimensions of graded pieces of a standard graded algebra $A$ are called M-vectors (after Macaualy) and they can be described by nonlinear inequalities. 

Murai and Nevo \cite{MuraiNevo} conjectured that similar nonlinear relations hold among coefficients of the $\cd$-index of a poset $P$. Further  evidence for this conjecture was given by Murai and Yanagawa \cite{MuraiYanagawa}. The conjecture is that there exists a standard multigraded algebra $A$, such that the dimensions of the graded pieces of $A$ give the coefficients of the $\cd$-index. 

Let us recall the definition of the $\cd$-index so that we can state the precise conjecture. Let $P$ be a graded poset of rank $n+1$, with rank function $\rho$. We consider chains in $P$
\[ \hat{0} < x_1 < x_2 < \cdots < x_m < \hat{1}. \]
The type of this chain is $S = \{\rho(x_1),\ldots,\rho(x_m)\} \subset \{1,\ldots,n\}$. Let $f_S$ be the number of chains of type $S$ in $P$. We encode these flag numbers in the numbers $h_S$ by the formula
\[ \Psi_P(t_1,\ldots,t_n) := \sum_{S} h_S t^S = \sum_S f_S (t-1)^{\overline{S}},\]
where 
\[ t^S = \prod_{i\in S} t_i,\quad (t-1)^T = \prod_{i\in T} (t_i-1), \quad \overline{S} = \{1,\ldots,n\}\setmin S.\]
When the poset $P$ is Eulerian, then the numbers $h_S$ satisfy the generalized Dehn-Sommerville equations given by Bayer and Billera \cite{BayerBillera}, for example $h_S = h_{\overline{S}}$. The complete set of these linear relations is described by writing $\Psi_P$  as a polynomial in two noncommuting variables $\bc$ and $\bd$ of degree $1$ and $2$, respectively. A $\cd$-monomial corresponds to a polynomial in the variables $t_i$ by replacing $\bc$ with $t_i+1$ and $\bd$ with $t_i+t_{i+1}$ as illustrated in the example:
\[ \bc\bd\bd\bc = (t_1+1)(t_2+t_3)(t_4+t_5)(t_6+1).\]
Then by Bayer and Klapper \cite{BayerKlapper}, the polynomial $\Psi_P$ for an Eulerian poset $P$ of rank $n+1$ can be expressed as a homogeneous polynomial of degree $n$ in the variables $\bc$ and $\bd$. The polynomial $\Psi_P(\bc,\bd)$ is called the $\cd$-index of $P$. When $P$ is Gorenstein* (this means, the simplicial complex formed by chains in $P$ is a purely $(n-1)$-dimensional homology sphere), then the $\cd$-index has non-negative coefficients \cite{Stanley2, Karu}. Moreover, the $\cd$-index is the most efficient encoding of the flag numbers $f_S$ in the sense that there are no more linear relations among the coefficients of $\Psi_P(\bc,\bd)$ that hold for all Gorenstein* posets $P$, other than the coefficients being nonnegative and the coefficient of $\bc^n$ being equal to one \cite{Stanley2}.


Define the multidegree of a degree $n$ $\cd$-monomial $M$ as a zero-one vector in $\ZZ^n$ obtained by replacing each $\bc$ with $0$ and each $\bd$ with $10$. For example,
\[ \mdeg(\bc\bd\bd\bc) = (0,1,0,1,0,0).\]
For a $\cd$-monomial $M$ with multidegree $v=\mdeg(M)$, write $\Psi_{P,v}$ for the coefficient of $M$ in $\Psi_P(\bc,\bd)$. Let $\Psi_{P,v}=0$ for vectors $v$ that are not multidegrees of $\cd$-monomials.

We say that a $\ZZ^n$-graded $k$-algebra $A$ (associative, commutative, with $1$) is standard multigraded if $A_0 = k$ and $A$ is generated in degrees $e_i = (0,\ldots,0,1,0,\ldots 0)$ for $i=0,\ldots,n$. 

The following conjecture is the main subject of this article.

\begin{conjecture}[Murai-Nevo] \label{conj-main} Let $P$ be a Gorenstein* poset of rank $n+1$. Then there exists a standard $\ZZ^n$-graded $k$-algebra $A = \oplus_v A_v$, such that for any $v\in\ZZ^n$
\[ \Psi_{P,v} = \dim A_v. \] 
\end{conjecture}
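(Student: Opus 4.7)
The plan is to leverage the simplicial case established in this paper as a template, and reduce the general Gorenstein* case to it via a subdivision-plus-quotient argument. Given a Gorenstein* poset $P$ of rank $n+1$, I would first pass from $P$ to its barycentric subdivision $P'$, that is, the face poset of the order complex of the open interval $(\hz,\ho)$ in $P$. Since $P$ is Gorenstein*, this order complex is a homology sphere of dimension $n-1$, and $P'$ is therefore a simplicial Gorenstein* poset of the same rank as $P$. The simplicial case of the conjecture, proved in this paper, then produces a standard $\ZZ^n$-graded algebra $A'$ with $\dim A'_v = \Psi_{P',v}$, realized explicitly in terms of lattice paths.

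Next, I would relate $\Psi_P(\bc,\bd)$ to $\Psi_{P'}(\bc,\bd)$ using a barycentric-subdivision formula for the $\cd$-index, adapted to track multidegrees. The goal is to exhibit a $\ZZ^n$-graded ideal $I \subset A'$, generated in the standard degrees $e_i$, whose Hilbert function realizes the excess $\Psi_{P',v} - \Psi_{P,v}$. Concretely, among the lattice paths that form a $k$-basis of $A'_v$, one should identify those detecting only flags introduced by the subdivision (typically flags that pass through a barycenter) and designate them as ideal generators, while the remaining paths descend to a basis of the quotient $A := A'/I$. Then $A$ is again standard $\ZZ^n$-graded and satisfies $\dim A_v = \Psi_{P,v}$ for every $v \in \ZZ^n$, which is the statement of the conjecture.

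The hard part will be this second step. Barycentric subdivision of posets has a well-understood effect on the flag $f$-vector and on the $\cd$-index at the level of totals, but splitting the subdivision formula so as to assign each ``new'' chain a specific $\cd$-multidegree, and then matching this split to a standard multigraded ideal inside the lattice-path algebra, is the central technical challenge. It is the same obstacle that prevents extending the paper's result past simplicial spheres in the first place: recall that only numerical evidence is available even for shellable Gorenstein* spheres. A fallback strategy, likely required for non-shellable cases, is to construct $A$ directly from combinatorial intersection cohomology (Bressler--Lunts; Barthel--Brasselet--Fieseler--Kaup), refining Karu's proof of $\cd$-index non-negativity by equipping the primitive part of a hard-Lefschetz decomposition of the intersection cohomology module of $P$ with an additional $\ZZ^n$-filtration indexed by $\cd$-multidegrees, and then reading off $A_v$ as the associated graded pieces.
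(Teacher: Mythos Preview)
This statement is a \emph{conjecture}; the paper does not prove it in general, only in the simplicial case (Theorem~\ref{thm-simpl}). Your proposal is therefore not a comparison against an existing proof but an attempted attack on an open problem, and it has a concrete gap in the barycentric-subdivision step.

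The algebra $A'$ that the paper builds for a simplicial Gorenstein* sphere depends \emph{only on the $h$-vector} of that sphere (see Section~4: one draws $h_{k+1}$ edges from $(n-1,k)$ to $(n,n)$ and otherwise repeats the $B_{n+1}$ construction). For the order complex $P'$ of $P$, this $h$-vector is determined by the chain-length counts $\sum_{|S|=m} f_S(P)$, which encode strictly less than the full flag $f$-vector and hence strictly less than $\Psi_P(\bc,\bd)$. Already in rank~$4$ there are Gorenstein* posets with identical order-complex $h$-vectors but different $\cd$-indices (one free parameter $h_1=h_2$ on the simplicial side versus two independent coefficients, of $\bc\bd$ and $\bd\bc$, on the $\cd$ side). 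For two such posets your $A'$ is literally the same algebra, so no choice of ideal $I\subset A'$---let alone one ``detecting flags introduced by the subdivision''---can recover the distinct targets $\Psi_{P,v}$. The information you need has been discarded before you ever look for $I$.

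Your fallback via combinatorial intersection cohomology is a reasonable research direction, but as stated it is a wish, not an argument: Karu's proof of nonnegativity produces graded vector spaces with the correct dimensions, not a commutative ring, and there is no known $\ZZ^n$-filtration on those spaces whose associated graded is a \emph{standard} multigraded algebra. Supplying such a structure is precisely the content of the conjecture.
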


In \cite{MuraiNevo} the conjecture was given in a different but equivalent form. Conjecture~4.3 in \cite{MuraiNevo} states that the numbers  $\Psi_{P,v}$ should be the flag numbers of an $(n-1)$-coloured simplicial complex. Stated differently, Murai and Nevo conjectured that the algebra $A$ in Conjecture~\ref{conj-main} should be a polynomial ring modulo monomial relations. An arbitrary multigraded algebra $A$ can be degenerated to this form using a Gr\"obner basis.

Murai and Yanagawa \cite{MuraiYanagawa} proved that for any Gorenstein* poset $P$ and any set $S=\{i_1,\ldots,i_m\}$,
\[ \Psi_{P, e_{i_1}+\cdots+e_{i_m}} \leq \prod_{j=1}^m \Psi_{P,e_{i_j}}.\]
The case $m=2$ of this result was proved earlier in \cite{MuraiNevo}. If Conjecture~\ref{conj-main} is true, then this inequality follows from the surjectivity of the multiplication map
\[ A_{e_{i_1}}\otimes A_{e_{i_2}}\otimes\cdots \otimes  A_{e_{i_m}} \to A_{e_{i_1}+\cdots+e_{i_m}}.\]

In the shellable case we generalize this result to:

\begin{theorem} \label{thm-ineq} Let $P$ be a shellable Gorenstein* poset of rank $n+1$. Then for any $v,w\in \ZZ^n_{\geq 0}$
\[ \Psi_{P,v+w} \leq \Psi_{P,v} \cdot \Psi_{P,w}.\]
\end{theorem}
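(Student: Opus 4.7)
The plan is to prove the inequality by a combinatorial injection, in the spirit of the Murai--Yanagawa bound but applied to two arbitrary multidegrees rather than to unit vectors.

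The first step is to realize each coefficient $\Psi_{P,v}$ as the cardinality of a set of combinatorial objects associated with the shelling of $P$. Using a shelling of the order complex together with a compatible R-labeling, the usual shelling-based route to positivity of the $\cd$-index yields a decomposition
\[ \Psi_P(\bc,\bd) \;=\; \sum_{F} M_F(\bc,\bd), \]
in which $F$ runs over facets (maximal chains of $P\setmin\{\hz,\ho\}$) and each $M_F$ is a single $\cd$-monomial determined by the restriction/descent pattern of $F$. Writing $\cF_v=\{F:\mdeg(M_F)=v\}$, this gives $\Psi_{P,v}=|\cF_v|$. The inequality is trivial unless $v$, $w$, and $v+w$ are all valid multidegrees of $\cd$-monomials (equivalently, the $\bd$-blocks of $v$ and $w$ are pairwise disjoint and non-adjacent), so I may assume this; then the $\bd$-block starting positions of $M_{v+w}$ partition as $D_v\sqcup D_w$.

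The second step is to construct an injection $\phi:\cF_{v+w}\to\cF_v\times\cF_w$, $F\mapsto(F_v,F_w)$. For each $\bd$-block at positions $(i,i+1)$ with $i\in D_w$, modify the chain $F$ locally at ranks $i,i+1$ by passing to another element of the diamond rank-two interval that appears there; Eulerianness guarantees at least two atoms in this interval, so the modification is possible, and by the choice of R-labeling it converts the descent pattern at $(i,i+1)$ from that of $\bd$ to that of $\bc\bc$. Carrying out these surgeries at every block indexed by $D_w$ yields a new facet $F_v\in\cF_v$; symmetrically (surgery on the $D_v$-blocks) one obtains $F_w\in\cF_w$. The pair $(F_v,F_w)$ then determines $F$ rank by rank: inside a $D_v$-block $F$ agrees with $F_v$, inside a $D_w$-block with $F_w$, and elsewhere all three coincide. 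Thus $\phi$ is injective, giving $|\cF_{v+w}|\le|\cF_v|\cdot|\cF_w|$ and hence the theorem.

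The principal difficulty is in verifying that the local chain surgery inside a single $\bd$-block actually relocates the facet into the correct class $\cF_v$ of the shelling decomposition, and that the choice can be made canonically. This requires a careful matching of the shelling order, the R-labeling, and the surgery rule inside each rank-two diamond, and is where shellability and the Gorenstein* hypothesis are used most critically. Once the surgery is well defined one block at a time, the disjointness and non-adjacency of the $D_v$- and $D_w$-blocks guarantee that the surgeries at different blocks commute, so the global map $\phi$ is well defined and injectivity reduces to the rank-by-rank reconstruction above.
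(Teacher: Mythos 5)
Your approach is genuinely different from the paper's, but it has a gap that is not a technicality --- it is the entire content of the theorem, and you say as much yourself at the end.

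The paper never decomposes $\Psi_P$ as a sum of one $\cd$-monomial per maximal chain. Instead it realizes the coefficient of $M(\bc,\bd)$ as $M(C,D)(\Delta)$, where $C$ and $D=\partial\circ C$ are operations on shellable fans (Theorem~\ref{thm-old}). The inequality then follows by (i) peeling off the rightmost letter where $M_1$ and $M_2$ disagree, (ii) using the shelling to write $\partial C^j(\Delta)=\bigsqcup_i\Pi_i$ with $\Pi_i$ subfans over spheres, (iii) a lemma --- proved via an exact sequence of Cohen--Macaulay sheaves --- saying $M(\Pi_i)\le M\circ C^{n-m}(\Delta)$, and (iv) induction on dimension. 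No injection between facet-indexed sets appears anywhere.

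Your proposal has two unjustified steps, and the second is fatal. First, the decomposition $\Psi_P=\sum_F M_F$ with $M_F$ a \emph{single} $\cd$-monomial per maximal chain of $P\setminus\{\hz,\ho\}$ is not a consequence of ordinary shellability of the order complex: an R-labeling or lexicographic shelling records for each facet a \emph{descent set} $S\subseteq\{1,\dots,n\}$, giving $\Psi_P=\sum_F t^{S(F)}$ in the $t$-variables; the change of basis from $t^S$ to the $\cd$-basis does not send $t^S$ to a single $\cd$-monomial, so the sum over facets does not automatically sort itself into $\cd$-monomials one per facet. Even for the Boolean lattice this requires a specially constructed bijection (André permutations, or the admissible functions of Section~3), not a generic R-labeling. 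Second --- and this is what you flag as the ``principal difficulty'' --- you assert but do not prove that the diamond-flip surgery at a $\bd$-block lands the modified chain in $\cF_v$. Eulerianness gives you the unique other element of the rank-two interval, so the surgery is \emph{defined}, but there is no argument that it changes the local descent pattern from $\bd$ to $\bc\bc$ while leaving the rest of the monomial untouched; whether it does depends entirely on the R-labeling, and nothing in the proposal ties the labeling to the surgery. Since injectivity of $\phi$ rests wholly on this, the proof is incomplete. In short, you have correctly set up the bookkeeping (which multidegrees are relevant, how to reconstruct $F$ from $(F_v,F_w)$), but the step you defer is not a verification --- it is the theorem.
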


The exact definition of shellability that we use here is given in Section~2 below. Shellable posets include all face posets of convex polytopes.

The inequalities given in Theorem~\ref{thm-ineq} unfortunately are not enough for the existence of an algebra $A$ as in Conjecture~\ref{conj-main}. For example, consider the degree $6$ homogeneous $\cd$-polynomial
\[ \bc^6 +2(\bd\bc^4+\bc^2\bd\bc^2+\bc^4\bd)+(\bc^2\bd^2+\bd\bc^2\bd+\bd^2\bc^2) +2\bd^3.\]
This polynomial satisfies all the inequalities of the theorem, but it is not hard to check that there cannot exist a standard multigraded $k$-algebra $A$ as in the conjecture. I do not know if this polynomial is the $\cd$-index of any Gorenstein* poset.

 The $\cd$-index of a simplicial complex $P$ is much easier to understand. In fact the $\cd$-index is determined by the face numbers $f_i$ of $P$. For simplicial complexes we prove:

\begin{theorem} \label{thm-simpl} Let $P$ be the poset of a Gorenstein* simplicial complex. Then Conjecture~\ref{conj-main} holds for $P$.
\end{theorem}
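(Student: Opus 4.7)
The plan is to exploit two features specific to simplicial Gorenstein* complexes: the $\cd$-index depends only on the $h$-vector, and by the $g$-theorem for Gorenstein* simplicial spheres there exists a standard graded $k$-algebra $B=\bigoplus_k B_k$ with $\dim B_k = g_k$. The strategy is to write each coefficient $\Psi_{P,v}$ as a nonnegative integer combination $\sum_k N^k_v\, g_k$, where the $N^k_v$ admit a lattice-path interpretation, and then to assemble $A$ out of copies of $B$ indexed by such paths.

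First, I would establish an expansion
\[ \Psi_P(\bc,\bd)=\sum_{k=0}^{\lfloor n/2\rfloor} g_k\,\check\Phi_{n,k}(\bc,\bd), \]
where the $\check\Phi_{n,k}$ are universal $\cd$-polynomials with nonnegative integer coefficients depending only on $n$ and $k$. Such an expansion follows from the fact that the $\cd$-index of a simplicial complex is a linear function of the $h$-vector, combined with Dehn--Sommerville to eliminate $h_{>\lfloor n/2\rfloor}$ in favor of the $g_k$; the nontrivial combinatorial step is arranging the rearrangement so that the coefficients of the $\check\Phi_{n,k}$ are nonnegative integers. I would then interpret each coefficient $N^k_v$ of the multidegree-$v$ monomial in $\check\Phi_{n,k}$ as the number of admissible Motzkin-like lattice paths of length $n$ whose $\bc$/$\bd$ step pattern records $v$ and whose maximum height equals $k$.

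With this model in hand, I set
\[ A_v := \bigoplus_{\gamma}\, B_{h(\gamma)}, \]
where $\gamma$ ranges over admissible paths with profile $v$ and $h(\gamma)$ is the maximum height of $\gamma$. By construction $\dim A_v=\sum_k N^k_v\dim B_k=\Psi_{P,v}$. Multiplication on $A$ is defined by concatenating paths and simultaneously multiplying in $B$, then projecting back to the graded piece corresponding to the concatenated path. The main obstacle --- the genuine content of the proof --- is to choose the path model and multiplication so that $A$ becomes a standard multigraded algebra, i.e., so that it is generated by the single-coordinate pieces $A_{e_i}$. This reduces to showing that every admissible path factors as a concatenation of single-step paths of profiles $e_{i_1},\ldots,e_{i_m}$ summing to $v$, and that the induced multiplication map $A_{e_{i_1}}\otimes\cdots\otimes A_{e_{i_m}}\to A_v$ is surjective. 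The surjectivity rests on $B$ being standard graded (so that $B_k$ is spanned by $k$-fold products of elements of $B_1$) and on a matching factorization property for the admissible paths, which is where the specific choice of path model is essential.
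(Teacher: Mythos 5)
Your proposal takes a genuinely different route from the paper, and it has a real gap in the key step.

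The paper's construction for a simplicial Gorenstein* sphere is fully combinatorial and does not invoke the $g$-theorem at all. It takes as a basis for $A$ the admissible lattice paths $f\colon\{0,\ldots,n\}\to\ZZ$ of Definition~\ref{def-admfun}, modified so that the final edge from $(n-1,k)$ to $(n,n)$ comes with multiplicity $h_{k+1}$; multiplication is defined purely combinatorially as $f_1\cdot f_2 = \min(f_1,f_2)$ when supports are disjoint and the result is admissible, and $0$ otherwise. The only numerical input from the poset is $h_k\ge 0$, which follows from the Cohen--Macaulay property; the algebra is monomial by construction, so the product of basis elements is again a basis element or zero and there is no auxiliary algebra $B$ to contend with. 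Your approach, by contrast, requires as input a standard graded algebra $B$ with $\dim B_k = g_k$ for an arbitrary Gorenstein* simplicial complex, which is the full $g$-conjecture for homology spheres --- a vastly deeper result than anything the paper uses, and logically unnecessary for the statement being proved.

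The genuine gap is in your multiplication. You set $A_v = \bigoplus_\gamma B_{h(\gamma)}$ where $h(\gamma)$ is the maximum height of the path, and you propose to multiply by ``concatenating paths and simultaneously multiplying in $B$.'' But multiplication in $B$ sends $B_{h(\gamma_1)}\otimes B_{h(\gamma_2)}$ into $B_{h(\gamma_1)+h(\gamma_2)}$, while the target summand for the concatenated path $\gamma$ is $B_{h(\gamma)}$. For these to be compatible you would need $h$ to be additive under concatenation; but maximum height under any natural concatenation of lattice paths is not additive (it is a max, or at best subadditive), so the ``project back'' step has no candidate map and the multiplication is not well-defined. You acknowledge this is ``the main obstacle,'' but as stated the obstacle is not a technical loose end --- it is a structural incompatibility between the degree-adding multiplication in $B$ and a degree-non-adding statistic on paths, and it is not clear that any path statistic with the right generating function is additive. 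The paper's choice of the $\min$-product avoids this entirely by never leaving the combinatorial basis. Finally, the nonnegativity of your universal polynomials $\check\Phi_{n,k}$ is essentially equivalent, via Dehn--Sommerville, to the nonnegative $h$-vector expansion of the $\cd$-index, which is itself a consequence of the lattice-path combinatorics the paper develops (Theorem~\ref{thm-Bn} for the simplex case); it would not be free in a self-contained argument.
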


We give an explicit construction of the algebra $A$ in Section~4 below. 

In the special case where $P$ is the Boolean lattice $B_{n+1}$, the algebra $A$ can be described in terms of lattice paths as follows.

\begin{definition} \label{def-admfun}
 Let $M$ be a $\cd$-monomial of multidegree
\[ \mdeg(M) = v = (v_1,\ldots,v_n).\]
A function
\[ f: \{0,1,\ldots,n\} \to \ZZ \]
is called {\em admissible} for $M$ if the following is satisfied
\begin{itemize}
\item (Range of $f$) $f(0)=0$, $f(n)=n$,  $0\leq f(i) \leq i$ for $i=1,\ldots,n-1$.
\item (Strict ascent for $0$) If $v_i=0$, then $f(i-1)< f(i)$.
\item (Weak descent for $1$) If $v_i=1$, then $f(i-1)\geq f(i)$.
\item (Bound on descent) If $v_i=1$, $i>1$, then $f(i-1)-f(i)\leq f(i-2)+1$.
\end{itemize}
\end{definition}
 
Admissible functions give the coefficients of the $\cd$-index of $B_{n+1}$:

\begin{theorem} \label{thm-Bn}
 The coefficient of a $\cd$-monomial $M$ in $\Psi_{B_{n+1}}$ is the number of admissible functions for $M$.
\end{theorem}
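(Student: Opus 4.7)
My plan is to derive this theorem by specializing the construction of the multigraded algebra $A$ from Section~4 (which proves Theorem~\ref{thm-simpl}) to the Boolean lattice $B_{n+1}$. Since $B_{n+1}$ is the face poset of the $n$-simplex and is highly symmetric, I expect the general construction of $A$ (based on lattice paths for simplicial complexes) to specialize to something transparent whose basis can be indexed by admissible functions.

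Concretely, I would begin by writing out the explicit generators and relations of $A$ for $B_{n+1}$, and identifying the basis of each multigraded piece $A_v$. I would expect basis elements of $A_v$ to correspond to lattice paths from $(0,0)$ to $(n,n)$ staying weakly below the diagonal, with local step behavior at position $i$ determined by $v_i$: ascent when $v_i=0$ (corresponding to a $\bc$ factor) and descent when $v_i=1$ (starting a $\bd$ factor). The range condition $0\leq f(i)\leq i$ would come from the geometric below-diagonal constraint, and the endpoint conditions $f(0)=0$, $f(n)=n$ would come from the grading of $A$. As a sanity check, I would verify the claim for small $n$ by computing $\Psi_{B_{n+1}}$ directly (e.g., $\Psi_{B_3}=\bc^2+\bd$) and enumerating admissible functions by hand.

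The main obstacle is the descent bound condition $f(i-1)-f(i)\leq f(i-2)+1$. This condition, involving three consecutive values of $f$, is more intricate than the ascent/descent rules and must encode how the $\bd$ factor is realized in the multigraded structure of $A$. Establishing this likely requires a careful analysis of how basis elements of $A$ at position $i-1$ combine (under the algebra's relations) with those at position $i$ to give basis elements of $A_v$ when $v_{i-1}v_i=10$. A bijective argument matching the three-coordinate picture $(f(i-2),f(i-1),f(i))$ with the allowed products of generators in $A$ would complete the proof. If a direct bijection proves elusive, I would fall back on establishing matching recursions on both sides: a recursion for $\Psi_{B_{n+1}}$ via the pyramid operation (since $B_{n+1}$ is a pyramid over $B_n$) and a recursion for admissible functions by conditioning on the last step or last $\bd$ block.
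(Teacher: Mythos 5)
There is a gap here, and the main issue is that the plan is essentially circular within the paper's framework. In the paper, the algebra $A$ for $B_{n+1}$ is \emph{defined} to have the set of admissible functions as a $k$-basis; the content that requires proof is precisely that the number of admissible functions of multidegree $v$ equals the coefficient $\Psi_{B_{n+1},v}$ of the $\cd$-index. Theorem~\ref{thm-Bn} \emph{is} that counting statement. So ``specializing the construction of $A$'' and then ``identifying the basis of $A_v$ with admissible functions'' gets you nowhere: the identification holds by fiat, and you have not yet established the link to the $\cd$-index. Moreover, the paper's Section~4 derivation for general simplicial spheres is itself built on top of the Boolean case (it modifies the $\Pi_n$ shelling at the first step by multiplicities $h_{k+1}$), so specializing Section~4 to $B_{n+1}$ would invoke the very result you are trying to prove.

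The paper's actual argument is different in kind: it invokes Theorem~\ref{thm-old} (from \cite{Karu}), which interprets the $\cd$-coefficient of $M$ as the integer $M(C,D)(\Delta)$ obtained by applying the shelling operations $\partial$ and $C$ to the fan $\Pi_n$ over the boundary of the $(n+1)$-simplex. One then computes $\partial$ and $C$ explicitly on products $\Pi_k\times\sigma_l$ and $\Pi_k\times\Pi_l$, records the choice made at each application of $C$ as an integer, and observes that the resulting sequences $f(n)=n\mapsto\cdots\mapsto f(0)=0$ are exactly the admissible functions for $M$; in particular, the ``bound on descent'' condition $f(i-1)-f(i)\leq f(i-2)+1$ comes from the constraint $i\geq 0$ in the choice $(i,j)$ when $C$ is applied after $\partial$. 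That explicit bookkeeping is the heart of the proof and is entirely absent from your plan. Your fallback suggestion --- matching a pyramid recursion for $\Psi_{B_{n+1}}$ against a recursion for admissible functions --- is a genuinely different and potentially viable route, but you would need to actually set up and verify both recursions (including how the three-term descent bound transforms under the pyramid operation), which is where the real work lies.
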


This theorem is due to Fan and He \cite{FanHe}. Our notation is a bit different from their's and we also give a different proof using a shelling of $B_{n+1}$. There are other combinatorial interpretations of the coefficients of  $\Psi_{B_{n+1}}$, for example in terms of Andr\'e permutations \cite{Purtill, Stanley2}.

\begin{figure} [ht]
\begin{minipage}[h]{0.45\linewidth}
\centerline{\psfig{figure=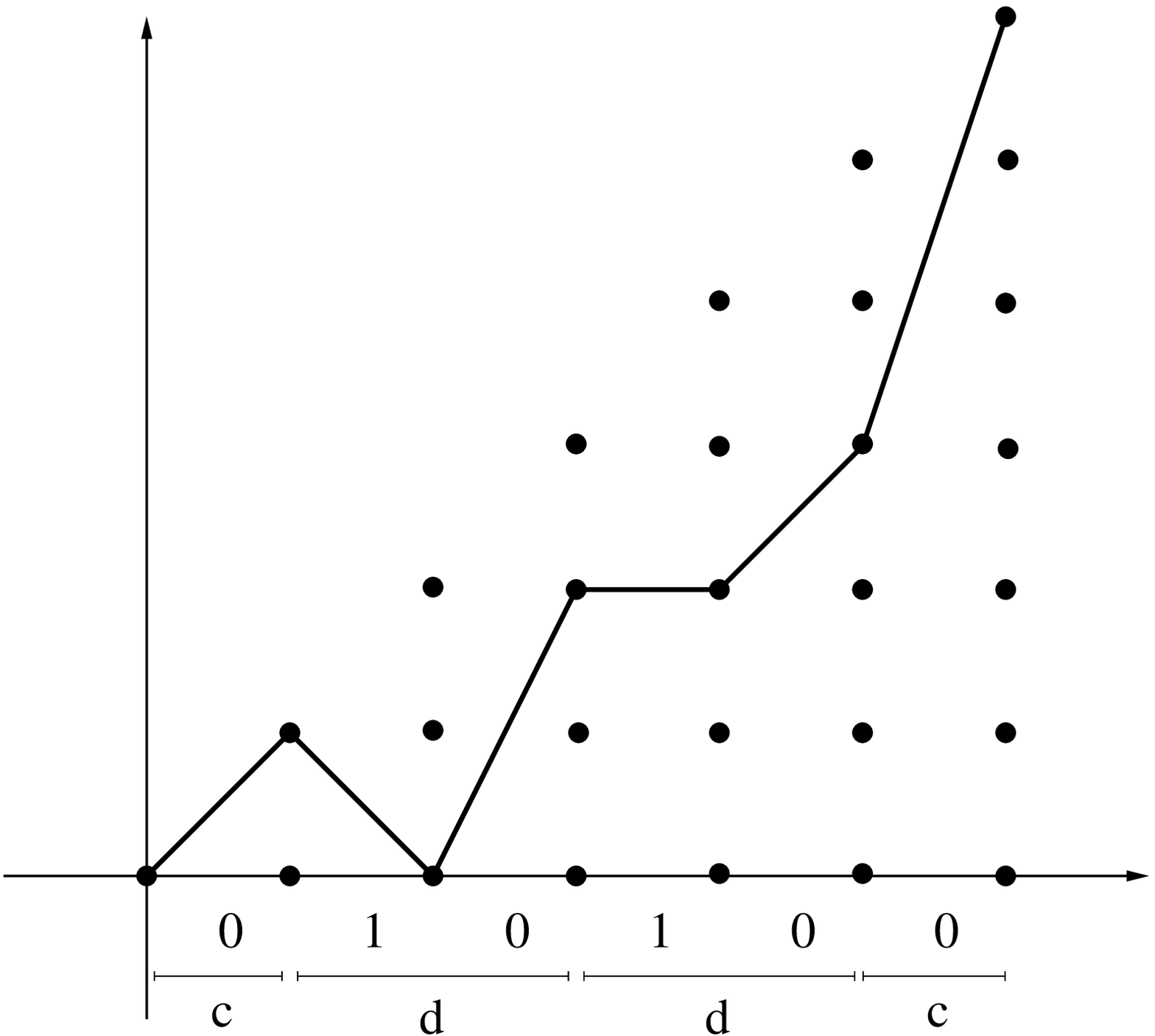,width=1\linewidth}}
\centerline{(a)}
\end{minipage}
\begin{minipage}[h]{0.45\linewidth}
\centerline{\psfig{figure=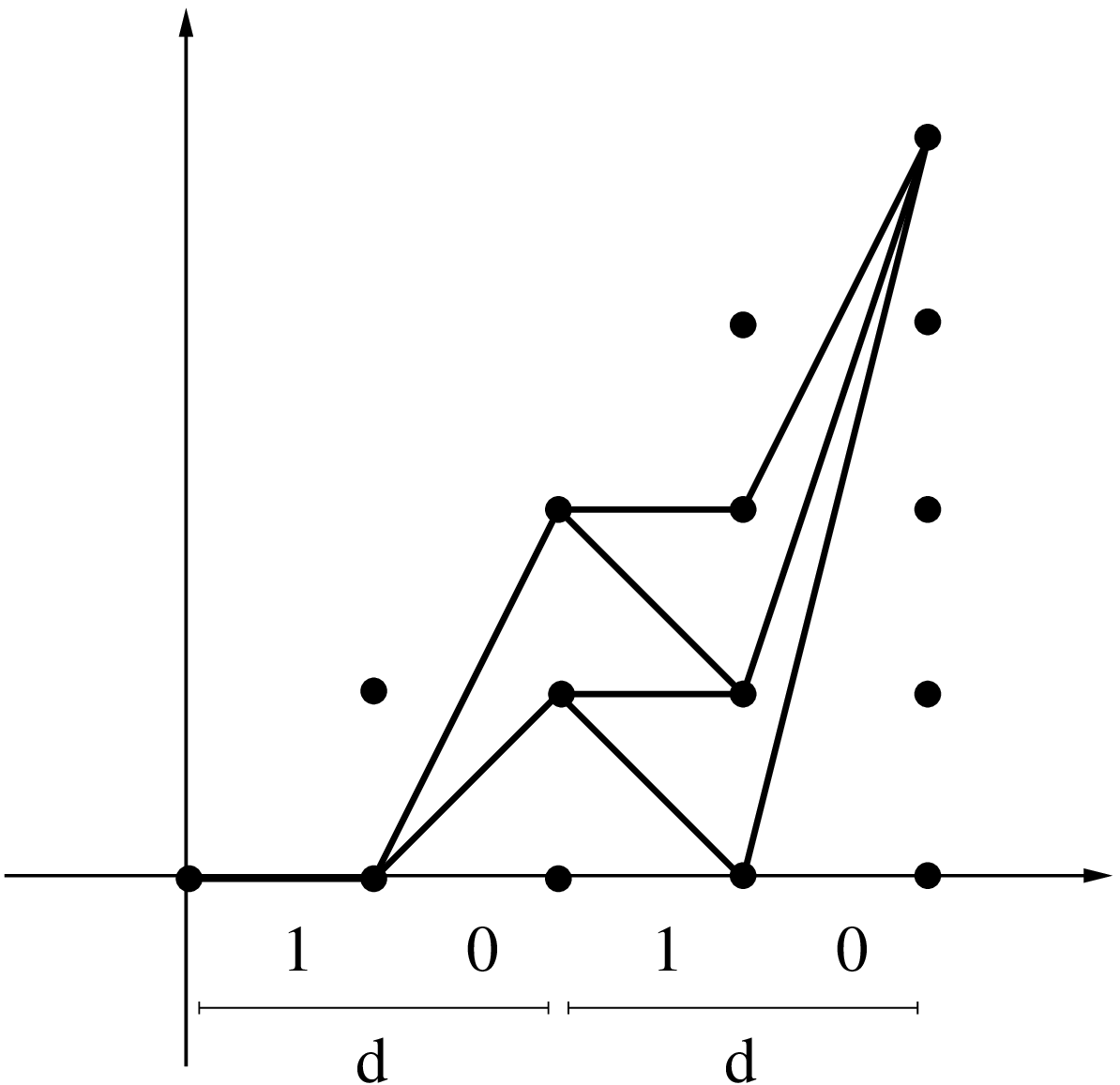,width=.95\linewidth}}
\centerline{(b)}
\end{minipage}
\caption{Admissible functions for $\bc\bd\bd\bc$ and $\bd\bd$.}
\end{figure}

To visualize admissible functions $f$, we extend them to piecewise linear functions $f: [0,n]\to\RR$. Figure~1(a) shows an example of an admissible function for $M=\bc\bd\bd\bc$. Figure~1(b) shows all $4$ admissible functions for $M=\bd\bd$. Note that the function $f=(0,0,2,0,4)$ is not admissible for $\bd\bd$ because it does not satisfy the ``bound on descent'' condition. The full $\cd$-index of $B_{5}$ is 
\[ \Psi_{B_5} = \bc^4 + 3 \bd \bc^2 + 5 \bc\bd\bc + 3\bc^2 \bd + 4\bd^2.\]

We define the algebra $A$ for $B_{n+1}$ by taking the admissible functions for all $\cd$-monomials of degree $n$ as the $k$-basis. The product of two admissible functions is defined to be either zero or an admissible function. More precisely, let $M_1$, $M_2$, $M_3$ be $\cd$-monomials of degree $n$, so that
\[ \mdeg(M_1) + \mdeg(M_2) = \mdeg(M_3).\]
Let $f_1$ and $f_2$ be two functions admissible for $M_1$ and $M_2$, respectively. Define
\[ f_1 \cdot f_2  = \begin{cases}
                    \min(f_1,f_2) & \text{if $\supp(f_1)\cap\supp(f_2) = \emptyset$ and $\min(f_1,f_2)$ is admissible for $M_3$,} \\
		    0 & \text{otherwise.}
                   \end{cases}
\]
Here $\min(f_1,f_2)$ is computed pointwise and the support of $f$ is
\[ \supp(f) = \{i\in\{0,\ldots,n\}| f(i)<i\}.\]
We prove in Section~3 that this multiplication turns $A$ into a standard multigraded $k$-algebra, hence $B_{n+1}$ satisfies Conjecture~\ref{conj-main}. The algebra $A$ for a general Gorenstein* simplicial complex  is constructed similarly using lattice paths.

It is not difficult to see that an admissible function $f$ in the algebra $A$ constructed above is a product of a unique set of admissible functions $f_{i_j}$ of multidegree $e_{i_j}$. This means that the algebra $A$ is monomial and one can construct from it an $n$-coloured simplicial complex as conjectured by Murai and Nevo. Each admissible function of degree $e_i$ gives a vertex coloured with $i$ and a set of these vertices span a simplex if the product of the corresponding functions is nonzero.

{\bf Acknowledgement.} I thank Satoshi Murai for interesting conversation about the main conjecture and for explaining the equivalence of Conjecture~\ref{conj-main} and Conjecture~4.3 in \cite{MuraiNevo}.

\section{Shellable fans}

We will use the terminology of fans as in the theory of toric varieties \cite{Fulton}. Since we are only interested in the poset of cones in a fan $\Delta$, everything below also applies to the posets of regular $CW$-complexes. Thus, our fans are not fans over polyhedral complexes, but fans over $CW$-complexes. This implies for example that the intersection of two cones in a fan $\Delta$ is not necessarily a cone but a subfan of the boundary of each cone. One can generalize this further to Gorenstein* posets, but since we are interested in shellable posets, we do not get anything beyond $CW$-complexes.

\begin{definition}
 An $n$-dimensional fan $\Delta$ is a fan over a regular $(n-1)$-dimensional $CW$-complex $C$, where $C$ is homeomorphic to the sphere $S^{n-1}$ or to the disk $D^{n-1}$.
\end{definition}

In particular, a fan is Gorenstein* if and only if it is a fan over a regular $CW$-sphere.

For an $n$-dimensional fan $\Delta$, let $\Delta^{\leq m}$ be its $m$-skeleton, consisting of all cones of dimension at most $m$. Also let $\partial\Delta$ be the boundary, which is a subfan of $\Delta^{\leq n-1}$ and let $\Int(\Delta) = \Delta\setmin \partial\Delta$ (as a set of cones). Similarly, let
\[ (\Int \Delta)^{\leq n-1} = \Delta^{\leq n-1}\setmin \partial\Delta.\]

We define shellability inductively on dimension.

\begin{definition}\label{def-shell}
The unique $0$-dimensional fan is shellable. For $n>0$, an $n$-dimensional fan $\Delta$ is shellable if $\partial\Delta$ is shellable and there exists a decomposition
\[ (\Int \Delta)^{\leq n-1} = \bigsqcup_i \Int \Delta_i,\]
where each $\Delta_i$ is an $n-1$ dimensional shellable fan (over a sphere or a disks).
\end{definition}

\begin{remark} The usual shelling of an $n$-dimensional fan consists of ordering maximal cones $\sigma_1,\ldots,\sigma_N$, so that if $\Sigma_j$ is the fan generated by $\sigma_1,\ldots,\sigma_j$, then $\sigma_{j+1}\cap \Sigma_j$ is a purely $(n-1)$-dimensional fan (over a sphere or a disk). Taking $\Delta_j = \sigma_{j+1}\cap \Sigma_j$ gives a shelling as in Definition~\ref{def-shell}, provided that $\partial \Delta$ and $\Delta_j$ are again shellable. It is known that projective fans, i.e., fans over the faces of a convex polytope, are shellable.
\end{remark}

There are two natural operations that take shellable fans to disjoint unions of shellable fans. Define:
\begin{align*}
\partial: \Delta &\mapsto \partial\Delta, \\
C: \Delta &\mapsto \bigsqcup_i \Delta_i,
\end{align*}
where $\Delta_i$ are as in Definition~\ref{def-shell}. We extend the operations from disjoint unions to disjoint unions of shellable fans. In particular, we can compose the operations. When applying an $n$-fold composition of the operations to an $n$-dimensional fan, the result is a disjoint union of $m$ $0$-dimensional fans. We identify this disjoint union with the integer $m$.

Note that $\partial\circ\partial (\Delta) = \emptyset$. Thus, to get interesting compositions of $\partial$ and $C$, we have to precede each $\partial$ with a $C$, unless $\partial$ is the first operation in the composition. Define a third operation:
\[ D: \Delta \mapsto \partial\circ C(\Delta).\]
Then the interesting compositions that yield non-empty results are of the form
\[ M(C,D), \quad M(C,D)\circ\partial,\]
where $M(C,D)$ is a monomial in $C$ and $D$.

The following theorem is a special case of results proved in \cite{Karu}.

\begin{theorem}\label{thm-old}
Let $\Delta$ be an $n$-dimensional shellable fan. Then the polynomial $\Psi_\Delta$ has the form
\[ \Psi_\Delta = f_n(\bc,\bd) + g_{n-1}(\bc,\bd) t_n\]
where $f_n$ and $g_{n-1}$ are homogeneous $\cd$-polynomials of degree $n$ and $n-1$, respectively. Moreover
\begin{enumerate}
\item If $M(c,d)$ is a $\cd$-monomial of degree $n$, then the coefficient of $M$ in $f_n$ is $M(C,D)(\Delta)$. 
\item If $M(c,d)$ is a $\cd$-monomial of degree $n-1$, then the coefficient of $M$ in $g_{n-1}$ is $M(C,D)\circ\partial(\Delta)$. 
 \end{enumerate}
 \end{theorem}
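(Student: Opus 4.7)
The plan is to argue by induction on $n$. In the base case $n = 0$ the fan consists only of the zero cone, so $\Psi_\Delta = 1$; this is the $\cd$-polynomial with the empty monomial appearing with coefficient $1$, matching the operator rule $M(C,D)(\Delta) = \Delta = 1$ for $M$ the empty monomial, and $g_{-1}$ is vacuous. For the inductive step with $\Delta$ of dimension $n$, I would first decompose $\Psi_\Delta$ by splitting chains according to whether their type $S \subset \{1,\ldots,n\}$ contains $n$. Chains with $n \in S$ factor uniquely through a top-dimensional cone $\tau$ and contribute $\sum_{\tau} \Psi_{[\hz,\tau]}(t_1,\ldots,t_{n-1})$, where the summand is the flag polynomial of the face lattice below $\tau$ (a Gorenstein* poset, since $\tau$ is a CW-ball with sphere boundary). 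Chains with $n \notin S$ contribute $(t_n-1)\,\Psi_{\Delta^{\leq n-1}}(t_1,\ldots,t_{n-1})$. Combining these, $\Psi_\Delta$ is visibly linear in $t_n$, so there exist unique polynomials $F(t_1,\ldots,t_n)$ and $G(t_1,\ldots,t_{n-1})$ with $\Psi_\Delta = F + G\,t_n$; the task is to verify $F = f_n(\bc,\bd)$ and $G = g_{n-1}(\bc,\bd) = \Psi_{\partial\Delta}(\bc,\bd)$ are $\cd$-polynomials with the asserted operator interpretation of coefficients.

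For the key step I would use a tip-counting argument. Any chain in $\Delta^{\leq n-1}$ with at least one intermediate element has a top cone $\sigma_m$ lying either in $\partial\Delta$ (in which case the whole chain is in $\partial\Delta$) or in a unique $\Int\Delta_i$ (in which case the chain lies entirely in $\Delta_i$ with tip in $\Int\Delta_i$). This yields a flag-number identity expressing $\Psi_{\Delta^{\leq n-1}}$ in terms of $\Psi_{\partial\Delta}$, $\sum_i \Psi_{\Delta_i}$, and $\sum_i \Psi_{\partial\Delta_i}$, modulo a correction handling the degenerate chain $\hz<\ho$. A parallel tip-argument applied inside each face poset $[\hz,\tau]$ handles $\sum_\tau \Psi_{[\hz,\tau]}$. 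By the inductive hypothesis each $\Psi_{\Delta_i} = f^i_{n-1}(\bc,\bd) + g^i_{n-2}(\bc,\bd)\,t_{n-1}$ with $g^i_{n-2} = \Psi_{\partial\Delta_i}$, and $\Psi_{\partial\Delta}$ is a pure $\cd$-polynomial because $\partial(\partial\Delta)=\emptyset$ forces the boundary $g$-part to vanish. Substituting these expansions into the tip-counting identity and collecting terms verifies the decomposition $\Psi_\Delta = f_n + g_{n-1}\,t_n$ with $g_{n-1} = \Psi_{\partial\Delta}$, and establishes the two recurrences: the coefficient of $\bc\,M'$ in $f_n$ equals $\sum_i$ (coefficient of $M'$ in $f^i_{n-1}$), and the coefficient of $\bd\,M'$ in $f_n$ equals $\sum_i$ (coefficient of $M'$ in $g^i_{n-2} = \Psi_{\partial\Delta_i}$).

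The operator interpretation then follows immediately from induction. For $g_{n-1} = \Psi_{\partial\Delta}$, the coefficient of any $\cd$-monomial $M$ of degree $n-1$ is $M(C,D)(\partial\Delta) = M(C,D)\circ\partial(\Delta)$, giving (2). For $f_n$, the coefficient of $\bc\,M'$ equals $\sum_i M'(C,D)(\Delta_i) = M'(C,D)\bigl(C(\Delta)\bigr) = M(C,D)(\Delta)$, and the coefficient of $\bd\,M'$ equals $\sum_i M'(C,D)(\partial\Delta_i) = M'(C,D)\bigl(D(\Delta)\bigr) = M(C,D)(\Delta)$, giving (1). The main obstacle is formulating the tip-counting flag identity cleanly: the $\Delta_i$ all share the $0$-cone, so naively summing $\Psi_{\Delta_i}$ over-counts the chain $\hz<\ho$; chains in a given $\Delta_i$ may pass through cones of $\partial\Delta_i$ that themselves belong to $\partial\Delta$ or to some other $\Int\Delta_j$; and $\Psi_{\partial\Delta_i}$ lives in one fewer variable than $\Psi_{\Delta_i}$, necessitating a consistent extension when the two are combined. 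Setting up this bookkeeping uniformly, so that it applies simultaneously to $\Delta^{\leq n-1}$ and to each face lattice $[\hz,\tau]$, is the real technical work; once done, the inductive hypothesis delivers the $\cd$-form and the operator interpretation essentially by unwinding definitions.
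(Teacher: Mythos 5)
The paper does not actually prove Theorem~\ref{thm-old}: it introduces it with the sentence ``The following theorem is a special case of results proved in \cite{Karu}'' and moves on. The cited proof is sheaf-theoretic (cones are replaced by constant sheaves, $C$ and $D$ become operations on Cohen--Macaulay sheaves, and the $\cd$-index is read off from graded dimensions). Your proposed argument is a genuinely different, purely combinatorial route: induct on the shelling, decompose $\Psi_\Delta$ at the top rank, and propagate the $\cd$-form through a flag-number identity. If it were carried out in full it would be more elementary than the paper's source and closer to the ``shelling components'' spirit of the rest of the paper, so the route is worth pursuing.

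However, the proposal as written is a plan rather than a proof, and the gap you flag yourself is the real one. The identity $\Psi_\Delta = \sum_\tau \Psi_{[\hz,\tau]} + (t_n-1)\Psi_{\Delta^{\leq n-1}}$ is correct, and the tip-counting identity
\[
\Psi_{\Delta^{\leq n-1}} \;=\; \Psi_{\partial\Delta} \;+\; \sum_i\bigl[\Psi_{\Delta_i} - (t_{n-1}-1)\,\Psi_{\partial\Delta_i}\bigr]
\]
can be established. But once the inductive hypotheses $\Psi_{\Delta_i} = f^i_{n-1}+\Psi_{\partial\Delta_i}\,t_{n-1}$ are substituted, the two unknown $\cd$-blocks of $f_n$ (the part ending in $\bc$ and the part ending in $\bd$) are \emph{not} determined by the $t_n$-coefficient alone; you also need the $t_n$-free part, which involves $\sum_\tau\Psi_{[\hz,\tau]}$. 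Solving the resulting system for the $\bc$- and $\bd$-blocks requires, and is equivalent to, a nontrivial identity relating $\sum_\tau\Psi_{[\hz,\tau]}$ to $\Psi_{\Delta^{\leq n-1}}$ and $\sum_i\Psi_{\Delta_i}$ (concretely, one needs $\sum_\tau\Psi_{[\hz,\tau]} = \Psi_{\Delta^{\leq n-1}} + \sum_i\Psi_{\Delta_i}$, or an equivalent reformulation). This is exactly the ``parallel tip-argument inside each $[\hz,\tau]$'' that you defer; without it, the claimed recurrences do not follow, and it is not obvious that the bookkeeping closes, since the $\Delta_i$ of the shelling and the boundary spheres $\partial\tau$ of the maximal cones are related by shelling theory but are not the same objects. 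Until that identity is proved, the inductive step is incomplete.

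One smaller issue of notation that affects the logic: you write the recurrences for ``coefficient of $\bc\,M'$'' and ``coefficient of $\bd\,M'$,'' but your decomposition is at the top rank $t_n$, which corresponds to the \emph{last} letter of the $\cd$-monomial. In the operator picture $M(C,D)$ is a composition applied right-to-left, so $C(\Delta)=\bigsqcup_i\Delta_i$ and $D(\Delta)=\bigsqcup_i\partial\Delta_i$ correspond to the rightmost letter of $M$. Your final computation $M'(C,D)\bigl(C(\Delta)\bigr)=M(C,D)(\Delta)$ is for $M=M'\bc$, not $M=\bc\,M'$. State the recurrences as ``coefficient of $M'\bc$'' and ``coefficient of $M'\bd$'' to make them consistent with the decomposition and with the operator semantics.
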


\begin{remark}
The theorem implies that $g_{n-1} = \Psi_{\partial \Delta}$ and that the $\cd$-polynomials $f_n$ and $g_{n-1}$ have nonnegative coefficients. The operation $C$ depends on the choice of the decomposition in the definition of shelling, but the $n$-fold composition of $\partial$ an $C$ applied to $\Delta$ is independent of the choices. 

Another implication of the theorem is that compositions of the operations $\partial$ and $C$ determine all flag numbers $f_S$ of $\Delta$. To see in a different way why this is true, note that if $\Delta$ is a fan over a sphere, then the flag numbers of $\Delta^{\leq n-1}$ determine the flag numbers of $\Delta$. If $\Delta$ is a fan over a disk, then we need the flag numbers of both $\partial\Delta$ and $\Delta^{\leq n-1}$ to determine the flag numbers of $\Delta$. Cutting $\Int \Delta$ into disjoint pieces simply expresses the set of chains in $\Int \Delta$ as a disjoint union (indexed by $\Delta_i$ containing the smallest element in the chain). Thus, the recursive construction of shelling explains why the flag numbers of $\Delta$ are determined by the operations $C$ and $\partial$.
\end{remark}

Given a monomial $M(\partial,C)$, we define its multidegree  by replacing each $C$ with $0$ and each $\partial$ with $1$. This in particular defines the multidegree of a monomial $M(C,D)$, which agrees with the previously defined multidegree of $M(\bc,\bd)$.

The following theorem together with Theorem~\ref{thm-old} implies Theorem~\ref{thm-ineq}. Note that by a Gorenstein* shellable poset we simply mean a shellable fan over a sphere.

\begin{theorem} \label{thm-ineq2}
Let $M_1, M_2, M_3$ be be $n$-fold compositions of the operations $\partial$ and $C$, such that 
\[ \mdeg M_1 + \mdeg M_2 = \mdeg M_3.\]
Then for any shellable $n$-dimensional fan $\Delta$ 
\[ M_3(\Delta) \leq M_1(\Delta) \cdot M_2(\Delta).\]
\end{theorem}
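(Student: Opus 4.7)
I plan to prove Theorem~\ref{thm-ineq2} by induction on the dimension $n$ of the fan. The base case $n=0$ is immediate since the unique $0$-dimensional fan contributes $1$ and all three compositions are empty. In the inductive step, I would peel off the rightmost (first-applied) operation of each $M_i$, writing $M_i = N_i \circ O_i$ with $N_i$ an $(n-1)$-fold composition. The multidegree sum condition at position $n$ leaves two cases, depending on whether $\mdeg M_3[n]$ is $0$ or $1$.

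Case A ($\mdeg M_3[n]=0$): all three $M_i$ must end in $C$, and $C(\Delta) = \bigsqcup_j \Delta_j$ is a disjoint union of $(n-1)$-dimensional shellable fans, giving $M_i(\Delta) = \sum_j N_i(\Delta_j)$. Applying the inductive hypothesis to each $\Delta_j$ yields $N_3(\Delta_j) \leq N_1(\Delta_j)\,N_2(\Delta_j)$, and summing together with the elementary inequality $\sum_j a_j b_j \leq (\sum_j a_j)(\sum_j b_j)$ for nonnegative reals produces the desired bound. Case B ($\mdeg M_3[n]=1$): we have $M_3 = N_3 \circ \partial$, and without loss of generality $M_1 = N_1 \circ \partial$, $M_2 = N_2 \circ C$. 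For $\Delta$ over a sphere this case is trivial, since $\partial\Delta=\emptyset$ forces $M_3(\Delta)=0$. Otherwise I would invoke the structural constraint that each $\partial$ not at the first-applied slot must be immediately preceded by a $C$ (else the composition gives $0$): this forces the refinement $M_1 = N_1' \circ C \circ \partial$, $M_3 = N_3' \circ C \circ \partial$, and $M_2 = N_2' \circ C \circ C$ with $(n-2)$-fold $N_i'$ still satisfying the analogous multidegree identity, and one recurses on the $(n-2)$-dimensional pieces of $C(\partial\Delta)$ and of $C^2(\Delta)$.

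The main obstacle, in my view, is the final comparison in Case B: one must bound $\sum_k N_3'(\Theta_k)$, where the $\Theta_k$ are the pieces of $C(\partial\Delta)$, by a product involving $\sum_{j,l} N_2'(\Delta_{j,l})$, where $\Delta_{j,l}$ are the pieces of $C^2(\Delta)$, yet the two collections of $(n-2)$-dimensional pieces are not in any obvious bijection. A purely per-piece induction is insufficient; the inequality must emerge from a global argument tying the shelling of $\Delta$ to shellings of its boundary and of its first-level pieces $\Delta_j$. I would expect the key input to be a compatibility statement between these shellings--in effect, that each cone in $\partial\Delta_j$ is either an interior face shared with a neighboring piece or a contribution to $\partial\Delta$--which allows the boundary-side sum and the interior-side sum to be matched. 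Recasting $M_i(\Delta)$ via Theorem~\ref{thm-old} as coefficients in $\Psi_\Delta$ and exploiting $\partial\circ\partial = 0$ should then convert this combinatorial bookkeeping into the required inequality.
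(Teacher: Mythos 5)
Your inductive framework is right, Case~A is handled correctly (the $\sum_j a_jb_j\le(\sum_j a_j)(\sum_j b_j)$ step is exactly the kind of absorption that is needed), and you have correctly located the hard point in Case~B: the pieces of $C(\partial\Delta)$ and of $C^2(\Delta)$ do not biject. But what you propose to close that gap is not where the paper goes, and I do not see how to make your version work.

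The missing ingredient in the paper is a subfan-domination lemma: if $\Pi\subset\Delta$ is an $m$-dimensional shellable subfan that is a fan over an $(m-1)$-sphere, then for any $m$-fold monomial $M$ one has $M(\Pi)\le M\circ C^{\,n-m}(\Delta)$. With this in hand the paper never needs to descend to $(n-2)$-dimensional pieces and match them up. It peels off the common tail $C^j$ together with the first $\partial$ all at once, writing (after swapping) $M_1=M_1'\,\partial\,C^j$, $M_2=M_2'\,C\,C^j$, $M_3=M_3'\,\partial\,C^j$, observes that $\partial C^j(\Delta)=\bigsqcup_i\Pi_i$ with $\Pi_i$ fans over $(n-j-2)$-spheres sitting inside $\Delta$, applies the dimension induction to each $\Pi_i$ to get $M_3'(\Pi_i)\le M_1'(\Pi_i)M_2'(\Pi_i)$, and then replaces $M_2'(\Pi_i)$ by the piece-independent bound $M_2'C^{j+1}(\Delta)=M_2(\Delta)$ using the lemma. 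The sum over $i$ then factors cleanly as $M_1(\Delta)\cdot M_2(\Delta)$. This sidesteps the mismatch you worried about because the $M_2$-factor is decoupled from the enumeration of pieces.

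The lemma itself is not proved by a shelling-compatibility argument of the sort you sketch (``each cone in $\partial\Delta_j$ is an interior shared face or contributes to $\partial\Delta$''). The paper proves it with the sheaf machinery of~\cite{Karu}: one takes the short exact sequence of constant sheaves $0\to\RR_\Pi\to\RR_{\Delta^{\le m}}\to\RR_{\Delta^{\le m}\setminus\Pi}\to 0$ of Cohen--Macaulay sheaves on the $m$-skeleton, uses additivity of the operators $C$ and $D$ on exact sequences, and concludes $M\circ C^{n-m}(\Delta)=M(\RR_\Pi)+M(\RR_{\Delta^{\le m}\setminus\Pi})\ge M(\Pi)$ since all terms are nonnegative. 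The author even remarks that he knows no proof of this lemma that avoids sheaves, which suggests that the purely combinatorial route you were hoping for is unlikely to succeed. So the decomposition and the identification of the obstacle are sound, but the actual mechanism that closes Case~B is absent from your proposal.
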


\begin{remark}
Theorem~\ref{thm-ineq2} is more general than Theorem~\ref{thm-ineq} because $\Delta$ need not be a fan over a sphere and the monomials $M_i$ may end with $\partial$. 
\end{remark}

We need the following lemma.

\begin{lemma}
Let $\Delta$ be an $n$-dimensional shellable fan and $\Pi\subset \Delta$ an $m$-dimensional shellable subfan, where  $\Pi$ is a fan over the $(m-1)$-sphere, $m<n$. If $M(\partial,C)$ is any $m$-fold composition of $\partial$ and $C$, then
\[ M(\Pi) \leq M\circ C^{n-m}(\Delta).\]
\end{lemma}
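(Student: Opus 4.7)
The plan is to prove the lemma by induction on the codimension $c=n-m$. First, a preliminary reduction: since $\Pi$ is a fan over a sphere, $\partial\Pi=\emptyset$, so if the innermost (first-applied) operation of $M$ is $\partial$, then $M(\Pi)=0$ and the inequality holds trivially. Thus we may assume $M=M'(C,D)$ is a monomial in $C$ and $D$. Via Theorem~\ref{thm-old} this reformulates the claim as a coefficient inequality of $\cd$-polynomials: the coefficient of $M'(\bc,\bd)$ in $\Psi_\Pi$ should be at most the coefficient of $M'(\bc,\bd)\,\bc^{n-m}$ in $f_n(\Delta)$.

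For the base case $c=1$, the pieces $\Delta_k$ of $C(\Delta)=\bigsqcup_k\Delta_k$ are $(n-1)$-dimensional, the same dimension as $\Pi$, and in $\cd$-language $\sum_k f_{n-1}(\Delta_k)$ is exactly the ``trailing $\bc$'' part of $f_n(\Delta)$. I would choose a shelling of $\Delta$ compatible with $\Pi$, so that $\Pi$ is realized as one of the pieces $\Delta_{k_0}$; this allows $\Psi_\Pi=f_{n-1}(\Delta_{k_0})$ to appear as one summand, while the remaining summands have nonnegative $\cd$-coefficients by Theorem~\ref{thm-old}, giving the desired bound. For the inductive step $c>1$, assuming the lemma for codimension $c-1$, one chooses a shelling of $\Delta$ so that $\Pi$ lies inside a single piece $\Delta_{k_0}$ of $C(\Delta)$ (a strictly weaker compatibility requirement, since $\dim\Pi<\dim\Delta_{k_0}$). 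Applied to the pair $(\Delta_{k_0},\Pi)$ of codimension $c-1$, the inductive hypothesis combined with summation over the pieces of $C(\Delta)$ yields
\[ M(\Pi)\leq M\circ C^{c-1}(\Delta_{k_0})\leq \sum_k M\circ C^{c-1}(\Delta_k)=M\circ C^{c}(\Delta). \]

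The main obstacle is establishing the existence of a shelling of $\Delta$ compatible with $\Pi$. When $\Pi=\partial\sigma$ for some maximal cone $\sigma$ of $\Delta$, this is immediate by placing $\sigma$ last in the shelling order. For more intricate $\Pi$ (such as longer cycles in the $(n-1)$-skeleton not bounding a single maximal cone), one must construct the compatible shelling by using that $\Pi\cong S^{m-1}$ sits inside $\Delta$ as a closed subcomplex separating a filled region, which can be shelled first so that $\Pi$ appears as (or inside) the next piece of the decomposition. The flexibility granted by the shelling-invariance of $M\circ C^{c}(\Delta)$ (noted in the remark after Theorem~\ref{thm-old}) is essential: we are free to pick any shelling adapted to $\Pi$. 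The detailed verification of shelling-compatibility is the technical heart of the proof.
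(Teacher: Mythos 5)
Your proposal takes a genuinely different route from the paper, and it has a gap that is not a technicality but the entire content of the lemma. You reduce everything to the existence of a shelling of $\Delta$ ``compatible with $\Pi$'' --- in the base case $c=1$ you need $\Pi$ to actually \emph{equal} one of the pieces $\Delta_{k_0}$ in the decomposition of $(\Int\Delta)^{\leq n-1}$, and in the inductive step you need $\Pi$ to lie entirely inside a single piece. You flag this as ``the technical heart'' but offer only a heuristic (``separating a filled region, which can be shelled first''). This is precisely where the argument breaks down. In the usual shelling construction described in the Remark after Definition~2.2, the piece $\Delta_j = \sigma_{j+1}\cap\Sigma_j$ is a sphere only when $\sigma_{j+1}$ is the last maximal cone, i.e., only when $\Delta_j = \partial\sigma_N$; so the only $(n-1)$-spheres realizable as shelling pieces of that type are boundaries of single facets. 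For a general subsphere $\Pi$ (say, an equator built from many facets), each step $\sigma_{j+1}\cap\Sigma_j$ gives a \emph{disk} sitting inside $\Pi$, never all of $\Pi$ at once. You would need to invoke the more abstract Definition~2.2 decomposition and prove from scratch that one exists with $\Pi$ as a piece and all other pieces shellable --- and nothing you wrote establishes that. Tellingly, the paper's author writes ``I only know how to prove this using sheaves,'' signalling that a purely combinatorial shelling argument is not available.

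The paper's actual proof sidesteps shelling choices entirely. Working in the sheaf framework of \cite{Karu}, it forms the short exact sequence of Cohen--Macaulay sheaves
\[ 0\to \RR_\Pi \longrightarrow \RR_{\Delta^{\leq m}} \longrightarrow \RR_{\Delta^{\leq m}\setmin \Pi} \to 0 \]
on the $m$-skeleton. After the same reduction you made (namely that $\partial\Pi=\emptyset$ lets one take $M$ to be a monomial in $C$ and $D$), one uses that the operations $C$ and $D$ extend to Cohen--Macaulay sheaves, are additive on exact sequences, and yield nonnegative values. Hence
\[ M\circ C^{n-m}(\RR_\Delta) = M(\RR_{\Delta^{\leq m}}) = M(\RR_\Pi) + M(\RR_{\Delta^{\leq m}\setmin \Pi}) \geq M(\RR_\Pi), \]
which is the inequality. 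Your initial reduction and the intended conclusion match the paper, but the bridge between them --- the compatible-shelling claim --- is unproven and, as it stands, not believable in general; replacing it with the sheaf-exact-sequence argument is what makes the lemma go through.
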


\begin{proof}
I only know how to prove this using sheaves on the fan $\Delta$. Using notation from \cite{Karu}, we replace the fans by the constant sheaves on them to get an exact sequence:
\[ 0\to \RR_\Pi \longrightarrow \RR_{\Delta^{\leq m}} \longrightarrow \RR_{\Delta^{\leq m}\setmin \Pi} \to 0.\]
The sheaves here are Cohen-Macaulay sheaves on $\Delta^{\leq m}$. The operations $C$ and $D$ can be applied to any Cohen-Macaulay sheaf and the result agrees with our definition of the operators $C$ and $D$. (Since by assumption $\partial\Pi = \emptyset$, we may assume that $M(\partial,C)$ is in fact a monomial in $C$ and $D$.) From \cite{Karu} the operations $C$ and $D$ are additive on exact sequences, hence
\[  M\circ C^{n-m}(\RR_\Delta) = M(\RR_{\Delta^{\leq m}}) = M(\RR_\Pi) + M(\RR_{\Delta^{\leq m}\setmin \Pi}).\]
Since all summands are nonnegative, the result follows.
\end{proof}

\begin{proof}[Proof of Theorem~\ref{thm-ineq}]
 
Switching $M_1$ and $M_2$ if necessary, we may assume that
\begin{align*}
M_1 &= M_1' \partial C^j,\\
M_2 &= M_2' C C^j,\\
M_3 &= M_3' \partial C^j,
\end{align*}
where $j\geq 0$ and
\[ \mdeg M_3' = \mdeg M_1' + \mdeg M_2'.\]
Since $\Delta$ is shellable, we get 
\[ \partial C^j(\Delta) = \bigsqcup_i \Pi_i,\]
where $\Pi_i$ are $(n-j-1)$-dimensional fans over spheres.  Moreover, $\Pi_i$ are subfans of $\Delta$, hence the previous lemma applies. Now
\begin{align*}
 M_3(\Delta) = \sum_i M_3'(\Pi_i) &\leq \sum_i M_1'(\Pi_i) \cdot M_2'(\Pi_i)\\
 &\leq \sum_i M_1'(\Pi_i) \cdot M_2'C^{j+1}(\Delta)\\
 &= (\sum_i M_1'(\Pi_i)) \cdot M_2'C^{j+1}(\Delta) = M_1(\Delta) \cdot M_2(\Delta).
\end{align*}
Here in the first inequality we used induction on dimension of the fans and in the second inequality we applied the lemma to the fans $\Pi_i\subset \Delta$ and monomial $M_2'$.
\end{proof}

\section{Boolean lattices}

Let $\Pi_n$ be the $n$-dimensional fan over the boundary of an $(n+1)$-simplex. The poset of cones in $\Pi_n$ is the Boolean lattice $B_{n+1}$. Geometrically, $\Pi_n$ is the fan of the toric variety $\PP^n$. 

For $\Pi_n$ we can write down an explicit shelling. Let $\sigma_n$ be the fan consisting of an $n$-dimensional simplicial cone and all its faces. In the shelling of $\Pi_n$ we will encounter fans of the type $\Pi_k \times \Pi_l$ and $\Pi_k\times \sigma_l$. These include the special cases $\Pi_k = \Pi_k\times \sigma_0$ and $\sigma_l = \Pi_0\times\sigma_l$. One can now easily verify:
\begin{align*}
 \partial (\Pi_k\times\Pi_l) &= \emptyset,\\
\partial (\Pi_k\times \sigma_l) &= \Pi_k\times \partial \sigma_l = \Pi_k\times \Pi_{l-1},\\
C (\Pi_k\times \sigma_l) &= C (\Pi_k) \times \sigma_l = \bigsqcup_{i=1,\ldots,k} \Pi_{i-1}\times \sigma_{k-i} \times \sigma_l =  \bigsqcup_{i=1,\ldots,k} \Pi_{i-1}\times \sigma_{k+l-i},\\
C (\Pi_k\times\Pi_l) &= \bigsqcup_{ {i=0,\ldots, k \atop j=0,\ldots,l} \atop (i,j) \neq (0,0)} \Pi_{i+j-1} \times \sigma_{k+l-i-j}.
\end{align*}

We want to count the number of 0-dimensional fans obtained by applying an $n$-fold composition of $\partial$ and $C$ to $\Pi_n$. Each such $0$-dimensional fan is the result of making a choice at every step we apply $C$: choose either $i$ or $(i,j)$. We encode these choices in a sequence of $n+1$ integers, one for each fan, as follows.  Applying $C$ to $\Pi_k\times \sigma_l$, the numbers are:
\[
\begin{tabu}{cccc}
  C: &  \Pi_k\times\sigma_l & \xmapsto{\text{choose $1\leq i\leq k$}} & \Pi_{i-1}\times\sigma_l \\
&k &\xmapsto{\hspace*{2.2cm}} & i-1
\end{tabu}
\]
Applying $\partial$ followed by $C$:
\[ 
\begin{tabu}{ccccc}
 \Pi_k \times \sigma_{l+1} & \stackrel{\partial}{\longmapsto} & \Pi_k\times\Pi_l &\xmapsto{\text{C: choose $i,j$}}& \Pi_{i+j-1}\times\sigma_{k+l-i-j} \\
k & \longmapsto & k+j & \xmapsto{\hspace*{1.7cm}} & i+j-1
\end{tabu}
\]
If $\partial$ is the last operation:
\[
\begin{tabu}{ccc}
 \Pi_0 \times \sigma_{1} & \stackrel{\partial}{\longmapsto} & \Pi_0\times\Pi_0  \\
0 & \longmapsto &  0 
\end{tabu}
\]

Applying a degree $n$ monomial $M(\partial,C)$ to $\Pi_n$ and fixing a choice at each step, we get a sequence of $n+1$ fans and the associated sequence of $n+1$ integers:
\[ n=i_n \longmapsto i_{n-1} \longmapsto \cdots\longmapsto i_0 = 0.\]
We define the function $f: \{0,\ldots, n\} \to \ZZ$ by $f(j) = i_j$. 

\begin{example}
Consider the monomial $\bc\bd\bd\bc = C\partial C\partial C C$ applied to $\Pi_6$:
\[
\begin{tabu}{ccccccccccccc}
\Pi_6 &\stackrel{C}{\mapsto} & \Pi_3\times \sigma_2 &\stackrel{C}{\mapsto} & \Pi_2\times \sigma_2 &\stackrel{\partial}{\mapsto} & \Pi_2\times \Pi_1 &\stackrel{C: (1,0)}{\mapsto} & \Pi_0\times \sigma_2 &\stackrel{\partial}{\mapsto} & \Pi_0\times \Pi_1 &\stackrel{C: (0,1)}{\mapsto}& \Pi_0\times \Pi_0 \\
6 &\mapsto & 3 &\mapsto & 2 &\mapsto & 2 &\mapsto & 0 &\mapsto & 1 &\mapsto &0
\end{tabu}
\]
Note that when applying $C$ to $\Pi_k\times\Pi_l$, we need to specify the choice of $(i,j)$ as this is not determined by the resulting fan.
The function constructed from this sequence:
\[ (f(0), f(1), f(2), f(3), f(4), f(5), f(6)) = (0,1,0,2,2,3,6)\]
is the admissible function for $\bc\bd\bd\bc$ shown in Figure~1(a).
\end{example}

We claim that the functions constructed as above for a $\bc\bd$-monomial $M(\bc,\bd)$ are precisely the admissible functions for $M$.  This implies that the number of admissible functions for $M$ is the coefficient of $M$ in $\Psi_{\Pi_n}$.

\begin{lemma}
 A function $f$ is admissible for a $\bc\bd$-monomial $M$ if and only if it is obtained by the construction above.  
\end{lemma}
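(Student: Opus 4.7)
The plan is to set up a direct dictionary between choice sequences and the functions $f$ they produce, from which both directions of the lemma follow immediately.

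The key observation is that every intermediate fan $F_p$ (the fan obtained after applying the last $n - p$ operations of $M$) is a product of one of two shapes, with total dimension $p$: either $\Pi_{k_p} \times \sigma_{l_p}$, obtained after a $C$ (or for $p = n$), or $\Pi_{k_p} \times \Pi_{l_p}$, obtained after a $\partial$. The type is therefore prescribed by $v = \mdeg(M)$: $F_p$ is of $\Pi \times \Pi$ type iff $v_{p+1} = 1$ (with $F_n = \Pi_n$ of $\Pi \times \sigma$ type by convention), and in that case the preceding $\partial$ forces $k_p = f(p+1)$. Unwinding the three tables preceding the lemma, $f(p)$ records $k_p$ in the $\Pi \times \sigma$ case and $k_p + j_p$ in the $\Pi \times \Pi$ case, where $(i_p, j_p)$ is the choice used by the $C$ operation at position $p$. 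Thus, given $f$ and $v$, the fans and choices are completely determined: $F_p = \Pi_{f(p)} \times \sigma_{p - f(p)}$ when $v_{p+1} = 0$ or $p = n$, and $F_p = \Pi_{f(p+1)} \times \Pi_{p - f(p+1)}$ when $v_{p+1} = 1$. In the latter case the choice at position $p$ is $j_p = f(p) - f(p+1)$ and $i_p = f(p-1) - f(p) + f(p+1) + 1$; in the former it is $i_p = f(p-1) + 1$.

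Next I would translate the membership constraints on these choices into inequalities on $f$ and check that they coincide with the four bullets of Definition~\ref{def-admfun}. For a $C$ on $\Pi_k \times \sigma_l$ with $k = f(p)$, the bound $1 \le i \le k$ reads $0 \le f(p-1) \le f(p) - 1$, which is range plus strict ascent at a position with $v_p = 0$. For a $C$ on $\Pi_k \times \Pi_l$ with $k = f(p+1)$ and $l = p - f(p+1)$, the four bounds $0 \le j_p \le l$ and $0 \le i_p \le k$ become respectively weak descent at $p+1$, the range bound $f(p) \le p$, bound on descent at $p+1$, and strict ascent at $p$ (all required since $v_{p+1} = 1$ forces $v_p = 0$); the exclusion $(i_p, j_p) \ne (0, 0)$ is automatic from $f(p-1) \ge 0$. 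For a $\partial$ at position $p$ (so $v_p = 1$), acting on $\Pi_{f(p)} \times \sigma_{p - f(p)}$, the only constraint $l \ge 1$ is $f(p) \le p - 1$. Finally $f(n) = n$ comes from $F_n = \Pi_n$ and $f(0) = 0$ from $F_0$ being $0$-dimensional.

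Both implications of the lemma are then immediate. If $f$ arises from a choice sequence, then the choices are valid in the sense above, so $f$ satisfies all four admissibility conditions. Conversely, if $f$ is admissible, the fans and choices defined by the formulas of the dictionary satisfy the membership constraints at each step, so they assemble into a valid sequence of operations producing $f$. The main obstacle I expect is bookkeeping in the $\Pi \times \Pi$ case: three consecutive values $f(p+1), f(p), f(p-1)$ must simultaneously encode the pair $(i_p, j_p)$, and the admissibility inequalities must be matched up carefully with the four independent bounds on $(i_p, j_p)$.
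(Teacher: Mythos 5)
Your proof is correct and takes essentially the same route as the paper: establish the explicit dictionary between choice sequences and functions, then verify that the bounds on the choices at each step translate exactly into the four admissibility conditions. The bookkeeping you worked out (in particular the $\Pi\times\Pi$ case, where three consecutive values of $f$ encode $(i_p,j_p)$, and the observation that $v_{p+1}=1$ forces $v_p=0$) is precisely the content of the paper's proof, which is only slightly more informal in how it records the correspondence.
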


\begin{proof}
Let us first check that a function constructed above is admissible.  First notice that the integers assigned to each fan are nonnegative and no bigger than the dimension of the fan. The first three conditions of admissibility are now easy to check from the definition. The ``bound on descent'' applies to the sequence
\[ k \longmapsto k+j \longmapsto i+j-1.\]
We need to check that 
\[ (k+j)-k \leq (i+j-1)+1,\] 
which follows from $i\geq 0$.

Conversely, given an admissible function, that means, a sequence of numbers $i_n\mapsto i_{n-1} \mapsto \cdots \mapsto i_0$, we construct the sequence of fans. Most numbers $i_j$ correspond to fans $\Pi_{i_j}\times \sigma_{j-i_j}$. The exceptions are the targets of the operation $\partial$, where the fans are
\[ 
\begin{tabu}{ccc}
i_k & \longmapsto & i_{k-1}=i_k+j  \\
 \Pi_{i_k} \times \sigma_{k-i_k} & \stackrel{\partial}{\longmapsto} & \Pi_{i_k}\times\Pi_{k-i_k-1} 
\end{tabu}
\]
If $\partial$ is followed by $C$, then from the sequence of numbers 
\[ i_k  \longmapsto  i_{k-1}=i_k+j  \longmapsto i_{k-2} = i+j -1 \]
we get the choice $(i,j) = (i_{k-2}-i_{k-1}+i_k +1,i_{k-1}-i_k)$ when applying the operation $C$. The fact that $0\leq i \leq i_k$, $0\leq j \leq k-i_k-1$ and $(i,j)\neq (0,0)$ follows from the admissibility of the function. 
\end{proof}

Recall the algebra $A$ defined in the introduction. It has a basis consisting of admissible functions for all $\bc\bd$-monomials of degree $n$, and multiplication is defined so that the product of two basis elements is again a basis element or zero.

\begin{lemma}
$A$ is a standard multigraded algebra.
\end{lemma}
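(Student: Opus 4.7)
The plan is to verify (i) that $\min$-multiplication defines a commutative multiplication with identity, (ii) that $A$ is $\ZZ^n$-graded with $A_0 = k$, (iii) generation in degrees $e_i$, and (iv) associativity. The commutativity, gradedness, and existence of the unit $e(j)=j$ are immediate: $e$ is admissible for $\bc^n$ (every condition on strict ascent is trivial), has empty support, and $f(j)\leq j$ pointwise for every admissible $f$, so $\min(e,f)=f$. The uniqueness of $e$ as an admissible function for $\bc^n$ (strict ascent at every position together with $f(0)=0$ forces $f=e$) gives $A_0=k$, and the multiplication is graded since a nonzero product lies in the graded piece of the sum of multidegrees.

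For generation, given an admissible function $f$ for a monomial $M$ of multidegree $v$, set $S=\{i:v_i=1\}$. I first observe that if $j\in\supp(f)$ but $j-1\notin\supp(f)$, then necessarily $v_j=1$: otherwise strict ascent at $j$ combined with $f(j-1)=j-1$ would force $f(j)\geq j$, contradicting $j\in\supp(f)$. Hence $\supp(f)$ decomposes into maximal intervals, each starting at a position of $S$. Within each such interval, list its $1$-positions $i_1<\cdots<i_m$ and partition it into blocks $B_\ell=[i_\ell,i_{\ell+1}-1]$ for $\ell<m$ and $B_m=[i_m,b]$ where $b$ is the right endpoint. Define $f_{(i_\ell)}(j)=f(j)$ for $j\in B_\ell$ and $f_{(i_\ell)}(j)=j$ otherwise. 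A short check shows each $f_{(i_\ell)}$ is admissible for $\bc^{i_\ell-1}\bd\bc^{n-i_\ell-1}$: strict ascent outside $B_\ell$ is automatic or inherited from $f$; weak descent at $i_\ell$ reduces to $f(i_\ell)<i_\ell$; and the bound-on-descent at $i_\ell$ reduces to $f(i_\ell)\geq 0$. By construction the supports $\supp(f_{(i_\ell)})\subseteq B_\ell$ are pairwise disjoint and $\min_{i\in S}f_{(i)}=f$ pointwise.

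For associativity, the key observation is that when $f_1,f_2$ have disjoint supports, $\min(f_1,f_2)(j)$ equals $f_k(j)$ on $\supp(f_k)$ and equals $j$ elsewhere, so the product's support is $\supp(f_1)\cup\supp(f_2)$. The same description extends to any collection with pairwise disjoint supports, so both $(f_1f_2)f_3$ and $f_1(f_2f_3)$ equal $\min(f_1,f_2,f_3)$ whenever they are nonzero. It remains to show that the two associations vanish simultaneously; the only nontrivial case is where all pairwise supports are disjoint, and I would show that if $\min(f_1,f_2,f_3)$ is admissible for $v_1+v_2+v_3$ then every pairwise $\min(f_i,f_j)$ is admissible for $v_i+v_j$. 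A position-by-position case analysis (distinguishing whether the positions $p,p-1,p-2$ lie in $\supp(f_i)$, $\supp(f_j)$, or neither) reduces the pair-admissibility at $p$ to the admissibility condition for whichever single factor contains $p$ in its support; off-support values are just the identity, which is innocuous. The same case analysis also shows that the iterated product $f_{(i_1)}\cdots f_{(i_k)}$ from step (iii) does not vanish at any intermediate stage, completing the generation claim.

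The main obstacle is this admissibility bookkeeping. The bound-on-descent at a 1-position $p$ couples the values of $\min$ at three consecutive positions $p,p-1,p-2$, so one must track which block of the decomposition each of them lies in; the critical inequality either comes down to $f(p)\geq 0$ or reduces cleanly to the bound-on-descent condition already satisfied by $f$. The analysis is routine but slightly delicate; in spirit it parallels the admissibility check implicit in the previous lemma, which produces admissible functions from sequences of $\partial$ and $C$ operations.
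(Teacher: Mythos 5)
Your overall plan matches the paper's proof closely: same block decomposition for generation (descent positions cut $\supp(f)$ into blocks, restrict $f$ to each block to get degree-$e_i$ generators), same reduction of associativity to the case of pairwise disjoint supports, same observation that in that case both bracketings produce $\min(f_1,f_2,f_3)$ when nonzero, and the same remaining task of showing the two bracketings vanish simultaneously. The preliminary steps (commutativity, unit $e(j)=j$, $A_0=k$, gradedness) and the admissibility check for the generators are carried out correctly and at adequate detail.

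The gap is in the associativity step, which is the crux. You reduce to showing: if $\min(f_1,f_2,f_3)$ is admissible then every pairwise $\min(f_i,f_j)$ is admissible. You then assert that a "position-by-position case analysis" reduces the bound-on-descent at $p$ for a pairwise $\min$ "to the admissibility condition for whichever single factor contains $p$ in its support," with "off-support values just the identity." This reduction is not correct as stated: the bound-on-descent at $p$ couples $p,p-1,p-2$, and a priori these three positions may be split across $\supp(f_i)$, $\supp(f_j)$, and neither, so the inequality does not reduce to a condition on one factor. The actual content of the step (which the paper does prove) is the contrapositive: if $g=\min(f_i,f_j)$ violates bound-on-descent at $p$, then the violation itself forces $p,p-2\in\supp(g)$ (from $g(p)\leq g(p-1)\leq p-1$ and from $g(p-2)=p-2$ making the inequality impossible since $g(p-1)-g(p)\leq p-1$), and then disjointness of supports plus the forced strict ascent of the third factor $f_k$ at $p-1$ (since $v_p=1$ forces $v_{p-1}=0$ in any valid $\cd$-multidegree) gives $f_k(p-2)=p-2$, $f_k(p-1)=p-1$, $f_k(p)=p$, so $\min(g,f_k)=g$ on $\{p-2,p-1,p\}$ and inherits the violation. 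Without this specific chain — in particular, without arguing that the violation pins down where $p,p-1,p-2$ live — the "routine case analysis" does not go through, and the same issue affects your final remark that the iterated products in the generation step are nonzero at every intermediate stage (that, too, uses this contrapositive inductively). So: same approach, but the one step that requires a real idea is left as an assertion; you should carry out the paper's argument to close it.
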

 
\begin{proof}
The multiplication is clearly commutative. The function $f(i)=i$ acts as $1$. We need  to prove associativity of the multiplication. The operation of taking the minimum is associative. We only need to rule out the case that $f_1\cdot f_2=0$, but $f_1\cdot(f_2\cdot f_3)\neq 0$. In this case the supports of $f_1, f_2, f_3$ are disjoint and $g=\min(f_1,f_2)$ is not admissible, it does not satisfy the ``bound on descent'' condition. Let us show that then $\min(g,f_3)$ also does not satisfy this condition.

Suppose $g$ has a descent at $i$: $g(i) \leq g(i-1) > g(i-2)$ and the descent is too big, $g(i-1)-g(i) > g(i-2)+1$. These conditions imply that $i, i-2 \in \supp(g)$. The support condition implies that $f_3(i-2)= i-2$, and since $f_3$ must have an ascent at $i-1$, $f_3(i-2)<f_3(i-1)=i-1$. Thus, none of $i-2, i-1, i$ lie in $\supp(f_3)$, hence $\min(g,f_3)$ takes the same values as $g$ at $i-2,i-1, i$, and the ``bound on descent" condition is also violated for $\min(g,f_3)$.

It remains to prove that $A$ is generated in degrees $e_i$, $i=1,\ldots,n$. Let $f$ be an admissible function that has descents at $l_1,l_2,\ldots,l_m$, $f(l_j-1)\geq f(l_j)$. Define for $j=0,\ldots,m-1$
\[ f_j(i) = \begin{cases}
             f(i) & \text{if $i=l_j,\ldots,l_{j+1}-1$,}\\ i &\text{otherwise,}
            \end{cases}
 \]
and
\[ f_m(i) = \begin{cases}
             f(i) & \text{if $i=l_m,\ldots,n$,}\\ i &\text{otherwise.}
            \end{cases}
 \]
Then $f_j$ is an admissible function  lying in degree $e_{l_j}$ and $f=f_1\cdot f_2\cdots f_m$.
\end{proof}

This finishes the proof of Theorem~\ref{thm-simpl} for the posets $B_{n+1}$.

\section{General simplicial spheres} 

Consider now an $n$-dimensional fan $\Delta$ over a Gorenstein*  simplicial complex (note that the complex may not be homeomorphic to a sphere, it is only a homology sphere). 

First assume that $\Delta$ is shellable in the usual sense: there exists an ordering of the maximal cones $\delta_1,\delta_2,\ldots,\delta_N$, such that if $\Sigma_j$ is the fan generated by $\delta_1,\ldots,\delta_j$, then $\delta_{j+1}\cap\Sigma_j$ is a purely $n-1$-dimensional subfan of $\partial \delta_{j+1}$.  This subfan is then necessarily of the form $\Pi_k\times \sigma_{n-k-1}$ for some $k=0,\ldots,n-1$. In fact, the number of fans  $\Pi_k\times \sigma_{n-k-1}$ we get by shelling $\Delta$ is equal to the number $h_{k+1}$ of $\Delta$. Thus, the shelling of $\Delta$ differs from the shelling of $\Pi_n$ only at the first step; while $\Pi_n$ yields one copy of $\Pi_k\times \sigma_{n-k-1}$ for each $k=0,\ldots,n-1$,  a general $\Delta$ yields $h_{k+1}$ copies of the same fan. We encode this in the admissible functions $f:\{0,\ldots,n\}\to \ZZ$ as follows.  We draw $h_{k+1}$ edges from $(n-1,k)$ to $(n,n)$. Now an admissible function is a piecewise linear function $f:[0,n]\to\RR$ as before, except that if $f(n-1)=k$, then the graph of $f$ on $[n-1,n]$ must be any one of the $h_{k+1}$ edges. (See Figre 2.) The number of such admissible functions again gives the coefficients of the $\bc\bd$-index of $\Delta$ and the algebra $A$ is constructed the same way as for the fan $\Pi_n$. 

\begin{figure} [ht]
\centerline{\psfig{figure=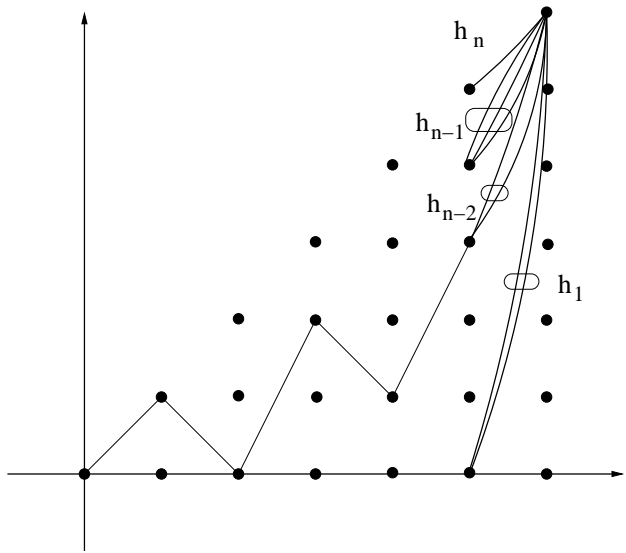,width=.5\linewidth}}
\caption{Graphs of admissible function.}
\end{figure}

Now let $\Delta$ be a general Gorenstein* simplicial fan, not necessarily shellable. Let $h_k$ be the $h$-numbers of $\Delta$ and construct the algebra $A$ using the numbers $h_k$ as in the shellable case. This algebra has the correct dimensions of graded pieces (the coefficients of the $\bc\bd$-index of $\Delta$) because the numbers $h_k$ determine the face numbers $f_i$ of $\Delta$, and these determine the flag numbers $f_S$, hence also the $\bc\bd$-index of $\Delta$. This proves Theorem~\ref{thm-simpl}.

\bibliographystyle{plain}
\bibliography{mvectbib}

 \end{document}